\def\@eqnnum{\rm ([section].\theequation)}
\numberwithin{equation}{section}
\newcommand*{\ilo}{\text{\begin{huge}$\times$\end{huge}}}
\DeclareMathOperator*{\iloczyn}{\ilo}
\DeclareMathOperator{\und}{and}
\newtheorem{df}{Definition}[section]
\newtheorem{p}[df]{Proposition}
\theoremstyle{definition}
\title{Semi-inner product structures for groupoids}
\author{Piotr Multarzy\'{n}ski\\{\it Faculty of Mathematics and Information Science}\\
 {Warsaw University of Technology} \\
{\it 00-661 Warsaw, Pl. Politechniki 1, Poland}\\
e-mail: multarz@mini.pw.edu.pl}
\date{}
\begin{document}
\maketitle
\begin{abstract}
In this paper there are  considered some scalar valued groupoid bihomomorphism structures, 
being in fact the groupoid counterparts of the inner product notion originally defined for vectors. 
These bihomomorphisms, called here the semi-inner products for groupoids, determine non-negative real valued 
functions which fulfill the axioms assumed for a groupoid norm concept \cite{Buliga}. 
\end{abstract}
{\bf Keywords:} Groupoid,  inner product, norm.
\section{Introduction}
A groupoid is a small category in which every morphism (arrow) is invertible, see for example \cite{Brandt, Pysiak, Williams}
and references therein. To be more explicite, let $M$ be a set of objects and $G$ be a set of morphisms, usually called arrows. In such a case we speak of a groupoid $G$ over $M$. Each arrow has an associated source  object (domain) and an associated target object (range). For these associations we introduce two mappings
$d\colon G\rightarrow M$ and $r\colon G\rightarrow M$, the so-called {\em source} and  {\em target}
(or {\em domain} and  {\em range}), respectively. 

For any nonempty subsets $P,Q\subset M$ we shall use the notation
 $G_P=d^{-1}(P)$ ($d$-inverse image of $P$), $G^Q=r^{-1}(Q)$ ($r$-inverse image of $Q$)
and $G_P^Q=G_p\cap G^Q$. 
In particular we have the source-fiber $G_p=d^{-1}(\{p\})$ and the target-fiber $G^q=r^{-1}(\{q\})$.   We shall also use the notation $G_p^q=G_p\cap G^q$. One can notice that $G_P^P$ is a sub-groupoid of $G$ over the base $P\subset M$ and,
in particular, $G_p^p$ is a group, the so-called isotropy group of $G$ at $p\in M$, for any point $p\in M$. 
\section{Affine congruence relation in a groupoid}
In this section we analyze an equivalence relation resembling the idea of parallel shift  of an element. 
\begin{df}\label{relacja lambda}
An equivalence relation $\lambda\subset G\times G$ is called the affine congruence in $G$ if 
\begin{enumerate}
\item (congruence)
\begin{equation}
[g_1\lambda g_2\wedge h_1\lambda h_2\wedge g_1h_1, g_2h_2\in G]\Rightarrow g_1h_1\lambda g_2h_2\; ,
\end{equation}
\item (parallelism) 
\begin{equation}
[g_1\lambda g_2\wedge h_1\lambda h_2\wedge g_1h_2, h_1g_2\in G]\Rightarrow g_1h_2\lambda h_1g_2\; .
\label{rownoleglobok}
\end{equation}
\end{enumerate}
\end{df}
The equivalence class of $g\in G$ with respect to  $\lambda$ is denoted as $[g]$. For any $p\in M$ the notation is used $[g]_p=[g]\cap G_p\;$.
\begin{df}\label{lambda-effective}
An affine congruence $\lambda$ in a groupoid  $G$ is 
\begin{enumerate}
\item  complete if 
 $[g]_p\neq\emptyset$, 
\item simple if $[g]_p$ consists of one element at most, 
\end{enumerate}
for any $g\in G$, $p\in M$.
\end{df}
\begin{df}
An affine congruence $\lambda$ in $G$ is efficient, if it is complete and simple.
 A pair $(G, \lambda )$ is said to be a $\lambda$-affine groupoid, if
$\lambda$ is an efficient affine congruence in  $G$. 
\end{df}
An interesting example of an affine congruence relation can be defined with the help of a groupoid homomorphism into a commutative group. 
\begin{p}Let G - grupoid, $(\mathbb A, +)$ - a commutative group, $\theta\colon G\rightarrow\mathbb A$ - a homomorphism.
Then, the relation $\lambda\subset G\times G$, defined by
\begin{equation}\label{theta-paral}
g\lambda h\Longleftrightarrow \theta (g)=\theta (h),
\end{equation}
is an affine congruence in $G$.
\end{p}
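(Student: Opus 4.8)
The plan is to verify the three defining conditions of an affine congruence in turn: that $\lambda$ is an equivalence relation, then the congruence axiom, and finally the parallelism axiom. The guiding observation is that $\lambda$ is exactly the kernel (fiber) relation of $\theta$, so the homomorphism property $\theta(gh)=\theta(g)+\theta(h)$ for composable $g,h$ will carry the entire argument, with commutativity of $\mathbb{A}$ entering at exactly one place.

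First I would dispatch the equivalence-relation requirement. Since $g\lambda h$ means $\theta(g)=\theta(h)$, reflexivity, symmetry, and transitivity of $\lambda$ are immediate consequences of the corresponding properties of equality in $\mathbb{A}$; nothing beyond set-theoretic equality is needed here. Next, for the congruence axiom, I would assume $g_1\lambda g_2$ and $h_1\lambda h_2$ with both products $g_1h_1$ and $g_2h_2$ defined, so that $\theta(g_1)=\theta(g_2)$ and $\theta(h_1)=\theta(h_2)$. Applying the homomorphism property to each product and substituting gives
\begin{equation*}
\theta(g_1h_1)=\theta(g_1)+\theta(h_1)=\theta(g_2)+\theta(h_2)=\theta(g_2h_2),
\end{equation*}
whence $g_1h_1\lambda g_2h_2$. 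This step uses only that $\theta$ is a homomorphism and holds for any group target.

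For the parallelism axiom I would assume $g_1\lambda g_2$ and $h_1\lambda h_2$ with $g_1h_2$ and $h_1g_2$ defined. Expanding both sides by the homomorphism property yields $\theta(g_1h_2)=\theta(g_1)+\theta(h_2)$ and $\theta(h_1g_2)=\theta(h_1)+\theta(g_2)$. Substituting $\theta(h_2)=\theta(h_1)$ into the first and $\theta(g_2)=\theta(g_1)$ into the second reduces the desired equality $\theta(g_1h_2)=\theta(h_1g_2)$ to $\theta(g_1)+\theta(h_1)=\theta(h_1)+\theta(g_1)$. This is precisely where the hypothesis that $(\mathbb{A},+)$ is commutative is indispensable: the two parallelogram products are $\lambda$-related exactly because addition in $\mathbb{A}$ is insensitive to the order of summands. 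Commutativity supplies the equality, so $g_1h_2\lambda h_1g_2$ follows.

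The argument is otherwise routine, and the only genuine conceptual point is isolating the role of the hypotheses: the congruence axiom reflects mere additivity of $\theta$, whereas parallelism is the direct manifestation of commutativity. I therefore expect no real obstacle, only the bookkeeping of substitutions; the ``hard part,'' such as it is, amounts to recognizing that commutativity cannot be dropped for the second axiom.
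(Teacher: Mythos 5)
Your proof is correct and follows essentially the same route as the paper: direct verification via the homomorphism property, with commutativity of $(\mathbb{A},+)$ used exactly at the parallelism axiom (the paper's chain $\theta(g_1h_2)=\theta(g_1)+\theta(h_2)=\theta(g_2)+\theta(h_1)=\theta(h_1g_2)$ uses it in the last equality, just as you do). Your version is slightly more explicit where the paper dismisses the equivalence-relation and congruence checks as evident, but the substance is identical.
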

\begin{proof}
Evidently, on the strength of (\ref{theta-paral}), $\lambda$ is a congruence relation in $G$. 
Now we show that the condition (\ref{rownoleglobok}) is fulfilled. 
Assume that
$g_1\lambda g_2$, $h_1\lambda h_2$ and $g_1h_2, h_1g_2\in G$. Hence we get $\theta (g_1)=\theta (g_2)$, $\theta (h_1)=\theta (h_2)$ and $\theta (g_1h_2)=\theta (g_1)+\theta (h_2)=\theta (g_2)+\theta (h_1)=\theta (h_1g_2)$.
\end{proof}
{\bf Example 1. }
 Assume $(M,V,l)$ to be an affine space, i.e. $M$ is a nonempty set (of points), $V$ is a linear space and $l\colon M\times V\rightarrow M$, $l(p,v)\equiv p+v$ is such that the affine space axioms are fulfilled. The Cartesian product $G=M\times V$ is a natural example of a groupoid in which, for any $g=(p,v)$, there is $d(g)=p$ and $r(g)=p+v$. For any $(p,v),(q,w)\in G$ such that $r(p,v)=d(q,w)$ there is $(p,v)+(q,w)=(p,v+w)$. 
Since $(V,+)$ is a commutative (additive) group, we can define the groupoid homomorphism $\theta\colon G\rightarrow V$, $\theta (g)=v$. Now, the affine congruence relation $\lambda$ in $G$ we define as 
$g\lambda h\Longleftrightarrow \theta (g)=\theta (h)$, for any $g,h\in G$. One can easily verify that $\lambda$ is efficient and consequently $(G,\lambda )$ is a $\lambda$-affine groupoid. 
\vspace{2mm}

{\bf Example 2. }
Let $G$, $\theta $ and  $\lambda$ be defined as in Example 1. Assume $P\subset M$ be a  proper (i.e. $P\neq M$) nonempty subset and consider the subgroupoid $G_P^P$ of $G$. Suppose that $p, q\in P$, $p\neq q$, $v\in V$, $q=p+v\in P$ and $s=q+v\not\in P$. Then $g=(p,v)\in G_P^P$ and $h=(q,v)\not\in G_P^P$, i.e. $[g]_q=\emptyset$. Consequently, the relation $\lambda$     is not a complete affine congruence in $G_P^P$ and therefore the groupoid $(G_P^P, \lambda )$ is not a $\lambda$-affine groupoid.   
\vspace{2mm}

{\bf Remark:}  A family of homomorphisms 
$\theta_i\colon G\rightarrow{\mathbb A}_i$,  $({\mathbb A}_i, +)$ - a commutative group,  for $i\in I$, can be represented by a single homomorphism $\theta\colon G\rightarrow\mathbb A$, where ${\mathbb A}=\iloczyn\limits_{i\in I}{\mathbb A}_i$ is the Cartesian product of groups and 
\begin{equation}
(\theta(g))_i=\theta_i(g)
\label{theta composed}
\end{equation}
\begin{df}
Suppose that $\theta\colon G\rightarrow\mathbb A$ is a homomorphism such that, for any $g\in G$, the condition $\theta (g)=0$ implies that there exists $p\in M$ such that $g=e_p$ (the neutral element of the group $G_p^p$). Then $\theta$ is said to be a monomorphism.
\label{mono}
\end{df}
\begin{p}Let $\theta\colon G\rightarrow\mathbb A$ be a monomorphism and $\lambda$ be an affine congruence in $G$ defined by (\ref{theta-paral}). Then  $\lambda$ is simple.
\end{p}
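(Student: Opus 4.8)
The plan is to unwind the definition of simplicity and reduce it to a cancellation computation. By Definition~\ref{lambda-effective}, I must show that for every $g\in G$ and every $p\in M$ the set $[g]_p=[g]\cap G_p$ contains at most one element. So I would start by taking arbitrary $h_1,h_2\in[g]_p$ and aim to prove $h_1=h_2$. By the definition of $[g]_p$ this hypothesis unpacks into $d(h_1)=d(h_2)=p$, together with $h_1\lambda g$ and $h_2\lambda g$; since $\lambda$ is given by (\ref{theta-paral}), the latter yields $\theta(h_1)=\theta(g)=\theta(h_2)$.

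The key idea is to manufacture an arrow whose $\theta$-image is $0$ so that the monomorphism hypothesis can be invoked. First I would record two elementary consequences of $\theta$ being a homomorphism into $(\mathbb A,+)$: applying $\theta$ to $e_pe_p=e_p$ gives $\theta(e_p)=0$ for every $p\in M$, and applying it to $hh^{-1}=e_{d(h)}$ gives $\theta(h^{-1})=-\theta(h)$. Because $d(h_1)=d(h_2)=p$, the target of $h_2^{-1}$ equals $d(h_2)=p=d(h_1)$, so the product $h_2^{-1}h_1$ is composable, and $\theta(h_2^{-1}h_1)=\theta(h_2^{-1})+\theta(h_1)=-\theta(h_2)+\theta(h_1)=0$. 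Definition~\ref{mono} then furnishes an object $q\in M$ with $h_2^{-1}h_1=e_q$.

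It remains to cancel $h_2$. Comparing source objects, $d(h_2^{-1}h_1)=d(h_2^{-1})=r(h_2)$ while $d(e_q)=q$, so necessarily $q=r(h_2)$ and hence $h_2^{-1}h_1=e_{r(h_2)}$. Left-multiplying by $h_2$ (a composable product, since $r(h_2)=d(h_2^{-1}h_1)$) and using $h_2h_2^{-1}=e_p$ together with the identity laws, I would conclude $h_1=e_ph_1=(h_2h_2^{-1})h_1=h_2(h_2^{-1}h_1)=h_2e_{r(h_2)}=h_2$, which is exactly the simplicity of $\lambda$.

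I expect the only real subtlety to be the bookkeeping with the source and target maps. The monomorphism hypothesis produces $h_2^{-1}h_1=e_q$ only for some a priori unspecified object $q$, and the argument is legitimate only after one reads off from the domain/range data that $q=r(h_2)$, which is what makes the final cancellation by $h_2$ well defined. Everything else is a routine application of the homomorphism property and the groupoid inverse and identity laws.
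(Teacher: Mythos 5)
Your proof is correct and follows essentially the same route as the paper's: apply $\theta$ to the composable ``difference'' of the two elements with common source, invoke the monomorphism property to identify it with a unit, and cancel. In fact your version is slightly more careful than the paper's, which forms $g_1g_2^{-1}$ (not obviously composable for $g_1,g_2\in G_p$ under the composition convention of Example~1, where $gh$ requires $r(g)=d(h)$), whereas your product $h_2^{-1}h_1$ is genuinely composable and you correctly pin down the object $q=r(h_2)$ before cancelling.
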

\begin{proof}
Suppose $g_1\lambda g_2$ and $g_1,g_2\in G_p$, for some $p\in M$. This means that $\theta (g_1)=\theta (g_2)$ and $\theta (g_1)-\theta (g_2)=\theta (g_1g_2^{-1})=0$. Since $\theta$ is a monomorphism, there is  $g_1g_2^{-1}=e_p$, for some $p\in M$, i.e. $g_1=g_2$.
\end{proof}
\section{Semi-inner product in a groupoid}
In this section we consider a groupoid scalar valued bihomomorphism resembling an inner or semi-inner product  for vector spaces. There are several proposals known as semi-inner products for vector space. In particular, there is a definition in which a semi-inner product does not need to be additive in the second argument. Then, such a semi-inner product, e.g. for an $L^p$-space, can be consistent with an $L^p$-norm which is not given by any inner product, if $p\neq 2$. In this approach we assume the conjugate symmetry property in order to enable the groupoid norm \cite{Buliga} to be defined  in terms of the bihomomorphism considered.
\begin{df}A function $B:G\times G\rightarrow\mathbb F$ is called the bihomomorphism of a  groupoid $G$ in an additive group $({\mathbb F}, +)$ (or  $\mathbb F$-bihomomorphism) if it is a homomorphism of $G$ in $\mathbb F$ with respect to each argument separately. 
\end{df}

In further, we restrict ourselves to the cases ${\mathbb F}={\mathbb R}$ or ${\mathbb F}={\mathbb C}$. 

\begin{df}Bihomomorphism $B:G\times G\rightarrow\mathbb F$ is said to be a  semi-inner product in $G$ if the following conditions are fulfilled:
\begin{enumerate}
\item (conjugate symmetry) $B(g,h)=\overline{B(h,g)}$, for any $g,h\in G$;
\item (positive definiteness) $B(g,g)> 0$, if $g\neq e_p$, for any $p\in M$ and $g\in G$; 
\item (Cauchy-Schwarz inequality) $|B(g,h)|\leq B(g,g)^{1/2}B(h,h)^{1/2}$, for any $g,h\in G$.
\end{enumerate}
\label{ilocz_skalar}
\end{df}
In the case of $\mathbb{F} = \mathbb{R}$, conjugate symmetry of $B$ reduces to symmetry.
\begin{df}
A pair $(G,B)$ is said to be a semi-unitary groupoid if $G$ is a groupoid and $B$ is a semi-inner product in $G$. 
\end{df} 
\begin{p}Suppose $n\in\mathbb N$ and  $\{\theta_i\}_{i=1}^n$ is a family of groupoid homomorphisms $\theta_i\colon G\rightarrow\mathbb C$ such that, for any $g\in G$, the condition $\forall_{i=1}^n\theta_i (g)=0$ implies that $\exists_{p\in M} g=e_p$. Then the formula
\begin{equation}
B(g,h)=\sum_{i=1}^n\theta_i(g)\overline{\theta_i(h)},
\label{formula B by theta}
\end{equation}
defines a semi-inner product in $G$.
\label{B by theta}
\end{p}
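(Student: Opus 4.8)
The plan is to verify the three defining properties of a semi-inner product from Definition~\ref{ilocz_skalar} for the proposed $B$. First I would establish that $B$ is genuinely an $\mathbb{F}$-bihomomorphism, i.e.\ a homomorphism in each argument separately. Fixing $h$, the map $g\mapsto B(g,h)=\sum_{i=1}^n\theta_i(g)\overline{\theta_i(h)}$ is a finite $\mathbb{C}$-linear combination of the homomorphisms $\theta_i$, and since $\mathbb{C}$ is commutative each $\theta_i$ is additive on composable pairs $\theta_i(g_1g_2)=\theta_i(g_1)+\theta_i(g_2)$; hence $B(g_1g_2,h)=B(g_1,h)+B(g_2,h)$ whenever $g_1g_2\in G$. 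The same argument applied to the second slot, using that $h\mapsto\overline{\theta_i(h)}$ is again a homomorphism into $(\mathbb{C},+)$ because complex conjugation is an additive automorphism, shows additivity in $h$. This dispatches the bihomomorphism requirement.

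Next I would check the three numbered conditions. Conjugate symmetry is immediate: $\overline{B(h,g)}=\overline{\sum_i\theta_i(h)\overline{\theta_i(g)}}=\sum_i\overline{\theta_i(h)}\theta_i(g)=B(g,h)$. For positive definiteness, observe that $B(g,g)=\sum_{i=1}^n|\theta_i(g)|^2\geq 0$, and this sum vanishes exactly when $\theta_i(g)=0$ for every $i$; by the standing hypothesis on the family $\{\theta_i\}$, that forces $g=e_p$ for some $p\in M$. Therefore $g\neq e_p$ for all $p$ gives $B(g,g)>0$, as required.

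For the Cauchy--Schwarz inequality I would recognize $B$ as the restriction to the groupoid of a standard Hermitian form on $\mathbb{C}^n$: writing $\Theta(g)=(\theta_1(g),\dots,\theta_n(g))\in\mathbb{C}^n$, we have $B(g,h)=\langle\Theta(g),\Theta(h)\rangle$ where $\langle\cdot,\cdot\rangle$ is the usual inner product on $\mathbb{C}^n$. The classical Cauchy--Schwarz inequality in $\mathbb{C}^n$ then yields $|B(g,h)|=|\langle\Theta(g),\Theta(h)\rangle|\leq\|\Theta(g)\|\,\|\Theta(h)\|=B(g,g)^{1/2}B(h,h)^{1/2}$. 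This step is the one requiring the most care, but it reduces entirely to a known inequality once the vector reformulation is made, so no genuine obstacle arises; the only subtlety is to note that $B$ on the groupoid inherits the inequality pointwise for each fixed pair $(g,h)$ regardless of composability, since the bound is a statement about the two complex vectors $\Theta(g),\Theta(h)$ alone. Assembling the three verified conditions establishes that $B$ is a semi-inner product in $G$.
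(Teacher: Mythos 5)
Your proposal is correct and follows exactly the paper's route: conjugate symmetry and positive definiteness read off directly from the formula, and the Cauchy--Schwarz inequality obtained by viewing $B(g,h)$ as the standard inner product of the vectors $(\theta_1(g),\dots,\theta_n(g))$ and $(\theta_1(h),\dots,\theta_n(h))$ in $\mathbb{C}^n$. You merely spell out the details (including the bihomomorphism check) that the paper's one-line proof leaves implicit.
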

\begin{proof}Conjugate symmetry and positive definteness are evident from definition of $B$. In turn, Cauchy-Schwarz inequality for $B$ is a direct consequence of the Cauchy-Schwarz inequality for the scalar product in ${\mathbb C}^n$.
\end{proof}
\begin{p} Suppose $G$ is a transitive groupoid and let $g\in G$, $p\in M$ be fixed elements. The following implication is true:
$$
\forall_{h\in G_p}B(g, h)=0\Rightarrow \forall_{k\in G}B(g, k)=0.
$$
\end{p}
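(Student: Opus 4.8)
The plan is to exploit transitivity to rewrite an arbitrary arrow $k$ as a composition of two arrows that both issue from the object $p$, and then to use only the fact that $B$ is a homomorphism in its second argument. The positive definiteness and Cauchy--Schwarz clauses of the semi-inner product play no role here; all that is needed is additivity of $B(g,\cdot)$ along composable pairs.

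Concretely, I would fix an arbitrary $k\in G$ and set $q=d(k)$. Since $G$ is transitive, the set $G_p^q$ is nonempty, so I can choose an arrow $a$ with $d(a)=p$ and $r(a)=q$. Because $r(a)=q=d(k)$, the product $ak$ is defined, and by the composition rule $d(ak)=d(a)=p$. Hence both $a\in G_p$ and $ak\in G_p$.

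Next I would apply the second-argument homomorphism property of $B$ to the composable pair $(a,k)$, which gives
$$
B(g,ak)=B(g,a)+B(g,k).
$$
By hypothesis $B(g,h)=0$ for every $h\in G_p$; applying this to the two elements $a,ak\in G_p$ yields $B(g,a)=0$ and $B(g,ak)=0$, whence $B(g,k)=0$. Since $k$ was arbitrary, this proves $\forall_{k\in G}\,B(g,k)=0$.

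The argument is essentially mechanical, and the only genuine point to verify is that the transported arrow actually lands in the source-fiber $G_p$, i.e.\ that composing on the left by $a\in G_p^q$ preserves the source object. The one substantive ingredient is transitivity, which guarantees that the connecting arrow $a$ exists for every target object $q=d(k)$; without it some fibers $G_p^q$ could be empty and the reduction would break down.
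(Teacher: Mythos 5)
Your proof is correct and takes essentially the same route as the paper's: transitivity supplies an arrow from $p$ to the source object of $k$, and additivity of $B(g,\cdot)$ over the resulting composable pair transfers the vanishing hypothesis from the fiber $G_p$ to $k$. The only cosmetic difference is that you use a single connecting arrow $a\in G_p^{d(k)}$, whereas the paper reaches the same fiber element as a composite $hl$ with $h\in G_p^q$ and $l\in G_q^{d(k)}$; nothing substantive changes.
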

\begin{proof}Let $h\in G_p^q$  and $k\in G_s$. 
Since $G$ is transitive, there exists $l\in G_q^s$ and $hlk\in G_p$. Consequently, we get $B(g,hlk)=0$ and $B(g,hl)+B(g,k)=0$, which means that $B(g,k)=0$.
\end{proof}
In the case when $G$ is a transitive groupoid, the above statement allows us to weaken the condition 3 of Definition \ref{ilocz_skalar}, i.e. instead $h\in G$ we can take $h\in G_s$, for a fixed point $s\in M$.
\begin{df}
We say that $g_1, g_2\in G$ are $B$-congruent elements if
\begin{equation}
\forall_{h\in G}\; B(g_1,h)=B(g_2,h).
\label{B-congruence relation}
\end{equation} 
In such a case we shall write $g_1\uparrow\uparrow g_2$.
\label{parallel}
\end{df}
Let $\{\theta_i\}_{i=1}^n$ be a family of groupoid homomorphisms which fulfills the assumptions of 
Proposition \ref{B by theta} and suppose $B(g,h)$ is given by formula (\ref{formula B by theta}). Additionally, suppose that for any $j=1,...,n$ there exists $h_j\in G$ such that $\theta_i(h_j)=\delta_{ij}$ (the Kronecker delta). Then the $B$-congruence relation  $\uparrow\uparrow $, defined by formula (\ref{B-congruence relation}), can be equivalently characterized by formula (\ref{theta-paral}). Indeed, for $g_1\uparrow\uparrow g_2$, in the condition $\forall_{h\in G} B(g_1,h)=B(g_2,h)$ we can substitute $h=h_j$ and obtain 
$\theta_j(g_1)=\theta_j(g_2)$, for any $j=1,...,n$.  Hence,  using the groupoid homomorphism $\theta$ given by formula
\ref{theta composed}, we can write equivalently $\theta (g_1)=\theta (g_2)$. On the other hand, if $\theta (g_1)=\theta (g_2)$, i.e. $\forall_{j=1,...,n}\theta_j(g_1)=\theta_j(g_2)$, 
for any $h\in G$
there is 
$B(g_1,h)=\sum_{i=1}^n\theta_i(g_1)\overline{\theta_i(h)}=
\sum_{i=1}^n\theta_i(g_2)\overline{\theta_i(h)}=B(g_2,h)$.
\begin{p} Suppose that $G$ is a transitive groupoid. Then, in Definition \ref{parallel} we can  replace $h\in G$ equivalently by $h\in G_s$, for some $s\in G$.
\end{p}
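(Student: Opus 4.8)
The plan is to prove the two directions separately. Throughout, write $G_s=d^{-1}(\{s\})$ for a fixed base point $s\in M$ (the statement should read $s\in M$ rather than $s\in G$). One implication is immediate: since $G_s\subseteq G$, any pair $g_1,g_2$ satisfying $B(g_1,h)=B(g_2,h)$ for all $h\in G$ certainly satisfies it for all $h\in G_s$. The entire content therefore lies in the converse, namely that testing $B$-congruence on the single fibre $G_s$ already forces it on all of $G$.

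First I would reformulate the problem in terms of a single homomorphism. Set $\phi(h):=B(g_1,h)-B(g_2,h)$. Because $B$ is a bihomomorphism, both $B(g_1,\cdot)$ and $B(g_2,\cdot)$ are homomorphisms of $G$ into the commutative group $(\mathbb F,+)$, and hence so is their difference $\phi\colon G\rightarrow\mathbb F$. With this notation the hypothesis $B(g_1,h)=B(g_2,h)$ for all $h\in G_s$ says exactly that $\phi$ vanishes on $G_s$, while the desired conclusion $g_1\uparrow\uparrow g_2$ says that $\phi$ vanishes on all of $G$. Thus the claim reduces to the assertion that a homomorphism $\phi\colon G\rightarrow\mathbb F$ vanishing on the fibre $G_s$ of a transitive groupoid must vanish identically. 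This is precisely the statement established earlier for $B(g,\cdot)$, whose proof uses nothing about $B(g,\cdot)$ beyond its being a homomorphism in the second argument.

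For completeness I would then run the short argument directly. Fix an arbitrary $k\in G$ and put $t=d(k)$. By transitivity the set $G_s^t=G_s\cap G^t$ is nonempty, so choose $l\in G_s^t$; then $r(l)=t=d(k)$, the product $lk$ is defined, and $d(lk)=d(l)=s$, so both $l$ and $lk$ lie in $G_s$. Applying the hypothesis gives $\phi(l)=0$ and $\phi(lk)=0$, and the homomorphism property $\phi(lk)=\phi(l)+\phi(k)$ then yields $\phi(k)=\phi(lk)-\phi(l)=0$. Since $k$ was arbitrary, $\phi\equiv 0$, that is $g_1\uparrow\uparrow g_2$.

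I do not expect a serious obstacle here; the single idea that drives the proof is recognizing that the \emph{difference} of the two partial homomorphisms $B(g_1,\cdot)$ and $B(g_2,\cdot)$ is again a homomorphism, which collapses a statement about two functions into the one-function vanishing lemma of the preceding proposition. The only points requiring minor care are the bookkeeping of sources and ranges ensuring that $lk$ is defined and lands in $G_s$, and the observation that $s$ must be taken in $M$.
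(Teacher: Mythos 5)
Your proof is correct and takes essentially the same route as the paper's: both use transitivity of $G$ to produce composable arrows lying in the fibre $G_s$ and then cancel via additivity of $B$ in the second argument (the paper chains $hl$, $hlk$ through the given $h\in G_s$, while you pick a fresh $l\in G_s^{d(k)}$ directly --- a cosmetic streamlining, as is your repackaging via the difference homomorphism $\phi=B(g_1,\cdot)-B(g_2,\cdot)$). Your correction of the statement's typo $s\in G$ to $s\in M$ agrees with the paper's own proof, which begins with $s\in M$.
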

\begin{proof}Let $s\in M$ and suppose that the identity $B(g_1,h)=B(g_2,h)$ is true for some
elements $g_1,g_2\in G$ and $h\in G_s$. Now, by the tratsivity of $G$, for any $k\in G$, there exists $l$ such that $hl, lk\in G$.
Since $hl, hlk\in G_s$ we can write equalities $B(g_1,hl)=B(g_2,hl)$ and
$B(g_1,hlk)=B(g_2,hlk)$. Then, by the bihomomorphism properties we get the equivalent identity 
$B(g_1,hl)+B(g_1,k)=B(g_2,hl)+B(g_2,k)$ and finally $B(g_1,k)=B(g_2,k)$.
\end{proof}
\begin{p} The relation $\uparrow\uparrow$ is an affine congruence  in $G$.
\end{p}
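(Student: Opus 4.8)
The plan is to verify that the relation $\uparrow\uparrow$, defined by $g_1\uparrow\uparrow g_2 \iff \forall_{h\in G}\, B(g_1,h)=B(g_2,h)$, satisfies all the requirements of Definition \ref{relacja lambda}. First I would confirm that $\uparrow\uparrow$ is an equivalence relation: reflexivity, symmetry, and transitivity all follow immediately from the corresponding properties of equality applied pointwise in $h$, so this step is routine.

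Next I would establish the congruence condition. Assume $g_1\uparrow\uparrow g_2$, $h_1\uparrow\uparrow h_2$, and that the products $g_1h_1, g_2h_2$ exist in $G$. I must show $g_1h_1\uparrow\uparrow g_2h_2$, i.e. that $B(g_1h_1,k)=B(g_2h_2,k)$ for every $k\in G$. Using that $B$ is a homomorphism in its first argument, I would write $B(g_1h_1,k)=B(g_1,k)+B(h_1,k)$; here I must be careful that the composability of $g_1h_1$ with respect to the bihomomorphism structure is understood correctly, since $B$ being a homomorphism in the first slot means it is additive over composable pairs. Then $B(g_1,k)=B(g_2,k)$ and $B(h_1,k)=B(h_2,k)$ by hypothesis, so summing gives $B(g_1h_1,k)=B(g_2,k)+B(h_2,k)=B(g_2h_2,k)$, as required.

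For the parallelism condition I would assume $g_1\uparrow\uparrow g_2$, $h_1\uparrow\uparrow h_2$, and that $g_1h_2, h_1g_2\in G$, and show $g_1h_2\uparrow\uparrow h_1g_2$. Again using first-argument additivity of $B$, for each $k$ I would expand $B(g_1h_2,k)=B(g_1,k)+B(h_2,k)$ and $B(h_1g_2,k)=B(h_1,k)+B(g_2,k)$. The hypotheses give $B(g_1,k)=B(g_2,k)$ and $B(h_2,k)=B(h_1,k)$, and substituting these shows both expansions equal $B(g_2,k)+B(h_1,k)$, hence they agree for all $k$.

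The main obstacle I anticipate is bookkeeping around composability rather than any deep difficulty: I must ensure that whenever I invoke the additivity $B(gh,k)=B(g,k)+B(h,k)$ of the homomorphism in the first argument, the pair $(g,h)$ is genuinely composable and that the relevant products with $k$ are defined so that the bihomomorphism axioms apply. Since the definition of $\uparrow\uparrow$ quantifies over all $h\in G$, and the congruence and parallelism hypotheses already presuppose the existence of the relevant products, these side conditions should be satisfied automatically; thus the verification reduces to the straightforward additive manipulations above.
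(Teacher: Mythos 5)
Your proposal is correct and follows essentially the same route as the paper's own proof: verify the equivalence relation properties directly, then derive both the congruence and parallelism conditions by adding the pointwise equalities $B(g_1,k)=B(g_2,k)$ and $B(h_1,k)=B(h_2,k)$ and invoking the additivity of $B$ in the first argument on composable pairs. Your worry about products involving $k$ is unnecessary (since $B$ is defined on all of $G\times G$, only the composability of the first-slot products matters, and that is given by hypothesis), but this does not affect the validity of the argument.
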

\begin{proof}First, let us notice that $\uparrow\uparrow$ is an equivalence relation in $G$. Indeed, 
reflexivity and symmetry properties of this relation are evident. To show the transitivity, we can write the conditions 
$g_1\uparrow\uparrow g_2$ and $g_2\uparrow\uparrow g_3$ as $B(g_1,h)=B(g_2,h)$ and $B(g_2,h)=B(g_3,h)$. Hence $B(g_1,h)=B(g_3,h)$, for any $h\in G$, which means that $g_1\uparrow\uparrow g_3$. Suppose now that $g_1\uparrow\uparrow g_2$, $h_1\uparrow\uparrow h_2$ and $g_1h_1, g_2h_2\in G$. Then, for any $k\in G$, we can write equivalently
$B(g_1,k)=B(g_2,k)$,  $B(h_1,k)=B(h_2,k)$ and 
$B(g_1,k)+B(h_1,k)=B(g_2,k)+B(h_2,k)$. Since $B$ is a bihomomorphism, we obtain
$B(g_1h_1,k)=B(g_2h_2,k)$, i.e.   $g_1h_1\uparrow\uparrow g_2h_2$. In turn, if 
$g_1\uparrow\uparrow g_2$, $h_1\uparrow\uparrow h_2$ and $g_1h_2, h_1g_2\in G$, we can write
$B(g_1,k)=B(g_2,k)$,  $B(h_1,k)=B(h_2,k)$ and $B(g_1,k)+B(h_2,k)=B(h_1,k)+B(g_2,k)$ or equivalently
$B(g_1h_2,k)=B(h_1g_2,k)$, i.e. $g_1h_2\uparrow\uparrow h_1g_2$.
\end{proof}
\begin{p}
Suppose $p\in M$ and $g_1,g_2\in G_p$. If  $g_1\uparrow\uparrow g_2$ then $g_1=g_2$, i.e.  the relation $\uparrow\uparrow $ is simple.
\label{g1g2-p-identy}
\end{p}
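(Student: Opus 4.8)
The plan is to manufacture from $g_1$ and $g_2$ a single element whose $B$-self-value vanishes, and then to invoke positive definiteness. The natural candidate is the ``quotient'' $u=g_2^{-1}g_1$. First I would check that this product is legitimate: since $g_1,g_2\in G_p$ we have $d(g_1)=d(g_2)=p$, hence $r(g_2^{-1})=d(g_2)=p=d(g_1)$, so $g_2^{-1}g_1$ is composable. (Note that the symmetric expression $g_1g_2^{-1}$ is \emph{not} available yet, since its existence would require $r(g_1)=r(g_2)$, which is not among the hypotheses.)

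Next I would exploit that, for each fixed $h$, the map $x\mapsto B(x,h)$ is a groupoid homomorphism into the additive group $\mathbb F$, so it sends inverses to their negatives and products to sums. Thus for every $h\in G$,
\begin{equation*}
B(u,h)=B(g_2^{-1}g_1,h)=B(g_2^{-1},h)+B(g_1,h)=B(g_1,h)-B(g_2,h)=0,
\end{equation*}
the last equality being exactly the hypothesis $g_1\uparrow\uparrow g_2$. In particular $B(u,u)=0$. By positive definiteness (condition 2 of Definition \ref{ilocz_skalar}), any element with vanishing $B$-self-value must be a unit, so $u=e_{p'}$ for some $p'\in M$.

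Finally I would read off the conclusion. Since $d(u)=r(g_2)$ and $r(u)=r(g_1)$, the equality $u=e_{p'}$ forces $r(g_1)=r(g_2)=p'$ and $g_2^{-1}g_1=e_{r(g_2)}$; left-multiplying by $g_2$ and using $g_2g_2^{-1}=e_p$ then yields $g_1=g_2$. I expect the only real subtlety to be the very first step: one must choose the one-sided product that is composable using the \emph{shared domain} $p$, rather than the more symmetric-looking $g_1g_2^{-1}$. Once $u=g_2^{-1}g_1$ is formed the rest is a direct computation, and the equality of ranges that one might otherwise fear having to establish separately simply falls out of $u$ being a unit.
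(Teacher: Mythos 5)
Your proof is correct and takes essentially the same route as the paper's: the paper forms $g_1^{-1}g_2$ (you form the mirror-image $g_2^{-1}g_1$, an immaterial relabeling), shows via the homomorphism property in the first argument that $B(g_1^{-1}g_2,h)=0$ for every $h\in G$, and concludes from positive definiteness that this element is a unit $e_q$ with $q=r(g_1)$, whence $g_1=g_2$. Your write-up is in fact slightly more explicit than the paper's at two points it leaves tacit, namely the composability check using the shared source $p$ and the specialization $h=u$ needed to invoke condition 2 of Definition~\ref{ilocz_skalar}.
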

\begin{proof} On the strength of  $B$-congruence relation between elements $g_1, g_2$, for any $h\in G$ there is 
$B(g_1,h)=B(g_2,h)$ and automatically $-B(g_1,h)+B(g_2,h)=0$, then $B(g_1^{-1},h)+B(g_2,h)=0$ and
$B(g_1^{-1}g_2,h)=0$, which means that  $g_1^{-1}g_2=e_q$ for  $q=r(g_1)\in M$. Finally, we get $g_1=g_2$.
\end{proof}

 In the sequel, we  shall write $[h]=\{g\in G: g\uparrow\uparrow h\}$ and $[h]_p=[h]\cap G_p$, for  $p\in M$. 

In general, the relation $\uparrow\uparrow$ may not be complete, i.e.
there might  be points $p\in M$ where $[h]_p=\emptyset$. For example, let 
$\Gamma ={\mathbb R}^2$ be the groupoid (over the base $\mathbb R$) of pairs, $(x,y)\circ (y,z)=(x,z)$, for any 
$(x,y),(y,z)\in \Gamma$. Define the groupoid homomorphism $\theta\colon \Gamma\rightarrow{\mathbb R}$, by formula $\theta ((x,y))=x-y$, for any $(x,y)\in\Gamma$, and the bihomomorphism  
$B\colon \Gamma\times\Gamma\rightarrow{\mathbb R}$ defined by 
$B((x_1,y_1),(x_2,y_2))=\theta ((x_1,y_1))\theta ((x_2,y_2))$, for any $(x_1,y_1),(x_2,y_2)\in\Gamma$. In fact, the bihomomorphism $B$ is a semi-inner product in $\Gamma$. Choose
$G\subset\Gamma$   to be a square-shaped subgroupoid, e.g.  
$G=\{(x,y)\in{\mathbb R}^2\colon  0\leq x\leq 1, 0\leq y\leq 1\}$ over the base 
$M=\{x\in{\mathbb R}\colon 0\leq x\leq 1\}$. One can easily check that for the element  $(1,0)\in G$ and any $x\in M$, $x\neq 1$, there is $[(1,0)]_x=\emptyset$.
This example motivates the following definition.
\begin{df} $(G,B)$ is said to be a  $B$-affine groupoid if it is a semi-unitary groupoid and the relation $\uparrow\uparrow$ is complete, i.e. for any  $h\in G$ and $p\in M$, there is $[h]_p\neq\emptyset$.
\label{G-afiniczny}
\end{df}
As a consequence of  Proposition \ref{g1g2-p-identy}, in the case of a $B$-affine groupoid $G$, all classes $[h]_p$ are singletons, for any $h\in G$, $p\in M$.
\begin{df}
If $B(g_1,h)=-B(g_2,h)$ for certain $g_1, g_2\in G$ and any $h\in G$, we say that $g_1, g_2$ are $B$-opposite. For $B$-opposite elements $g_1, g_2\in G$ we shall write $g_1\uparrow\downarrow g_2$.
\label{antyparallel}
\end{df}
{\bf Remark:} Evidently, if $g_1\uparrow\downarrow g_2$ and $g_2\uparrow\downarrow g_3$ then $g_1\uparrow\uparrow g_2$, for any $g_1,g_2,g_3\in G$.
\begin{df}
If $g,h\in G$ and $B(g,h)=0$, we say that $g$ and $h$ are $B$-orthogonal and we write $g\bot h$. 
\label{orto}
\end{df}
\begin{df}For any element $g\in G$ and $c\in\mathbb C$, 
we define the subset 
\begin{equation}
G(c,g)\equiv c\bullet g=\{k\in G\colon B(k,h)=c\cdot B(g,h),\; h\in G\}\subset G. 
\end{equation}
Additionally, for $p\in M$ we define $G_p(c,g)=(c\bullet g)_p=G(c,g)\cap G_p$.
\end{df}
If $c=0$, we get $G(0,g)=\{e_p\in G\colon p\in M\}$.
\begin{p}For any $c\in\mathbb C$, $g\in G$ and $k_1,k_2\in G(c,g)$, there is $k_1\uparrow\uparrow k_2$.
\end{p}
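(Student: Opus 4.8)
The plan is to unwind the two relevant definitions and observe that membership in $G(c,g)$ pins down the function $h\mapsto B(k,h)$ uniquely, while $\uparrow\uparrow$ identifies precisely those elements whose associated functions coincide. First I would fix $c\in\mathbb C$, $g\in G$, and two elements $k_1,k_2\in G(c,g)$. By the defining condition of $G(c,g)$, each $k_i$ satisfies $B(k_i,h)=c\cdot B(g,h)$ for every $h\in G$. Next I would equate the two expressions: since both $B(k_1,h)$ and $B(k_2,h)$ equal the common quantity $c\cdot B(g,h)$, it follows that $B(k_1,h)=B(k_2,h)$ for every $h\in G$. Finally I would invoke Definition \ref{parallel}, which characterizes $\uparrow\uparrow$ exactly as the equality of $B(\cdot,h)$ across all $h\in G$, to conclude $k_1\uparrow\uparrow k_2$.

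This argument uses no structural input beyond the definitions themselves: it does not invoke the semi-inner product axioms (conjugate symmetry, positive definiteness, or Cauchy--Schwarz), nor any transitivity or completeness hypothesis on $G$. The only point worth noting is that the bihomomorphism $B$ is genuinely defined as a function on the whole Cartesian product $G\times G$, so that $B(k_i,h)$ is meaningful for every pair $(k_i,h)$ regardless of composability; this is guaranteed by the definition of an $\mathbb F$-bihomomorphism. Consequently there is no substantive obstacle here — the statement reduces to the transitivity of equality of the maps $h\mapsto B(k,h)$, and the claim is a direct consequence of the fact that belonging to $G(c,g)$ forces this map to equal the single reference map $c\cdot B(g,\cdot)$.
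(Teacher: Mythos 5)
Your proof is correct and matches the paper's own argument exactly: both fix an arbitrary $h\in G$, use the defining condition of $G(c,g)$ to write $B(k_1,h)=c\cdot B(g,h)=B(k_2,h)$, and conclude $k_1\uparrow\uparrow k_2$ from Definition \ref{parallel}. Your added remark that $B$ is defined on all of $G\times G$ (so no composability issue arises) is a fair observation but not needed beyond what the paper already assumes.
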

\begin{proof}
Let  $h\in G$ be an arbitrary element. Then, by definition we obtain $B(k_1,h)=cB(g,h)=B(k_2,h)$, i.e. $B(k_1,h)=B(k_2,h)$.
\end{proof}
It is possible that the set $G(c,h)\subset G$ is empty, e.g. when $B$ is a real valued bihomomorphism then $G(i,h)=\emptyset$.
\begin{p}
For any $p\in M$, $c\in\mathbb C$ and $g\in G$ the set $G_p(c,g)$ has at most one element.
\end{p}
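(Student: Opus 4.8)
The plan is to obtain this as an immediate corollary of the two propositions that directly precede it, so almost all of the work has already been done. First I would fix $p\in M$, $c\in\mathbb C$, $g\in G$ and take two arbitrary elements $k_1,k_2\in G_p(c,g)$. By the definition $G_p(c,g)=G(c,g)\cap G_p$, each $k_i$ lies simultaneously in $G(c,g)$ and in the source-fiber $G_p$; these two memberships are exactly the hypotheses needed to feed the two earlier results.

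Second, since $k_1,k_2\in G(c,g)$, the immediately preceding proposition (which asserts that any two elements of $G(c,g)$ are $B$-congruent) yields $k_1\uparrow\uparrow k_2$. I would simply quote this rather than re-deriving it, although in a pinch one recovers it in one line: for every $h\in G$ one has $B(k_1,h)=c\,B(g,h)=B(k_2,h)$.

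Third, I would invoke Proposition \ref{g1g2-p-identy} (simplicity of $\uparrow\uparrow$): since $k_1,k_2\in G_p$ and $k_1\uparrow\uparrow k_2$, it follows that $k_1=k_2$. Hence $G_p(c,g)$ cannot contain two distinct elements, which is precisely the claim.

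Honestly, I do not expect a genuine obstacle here; the statement is a packaging of the previous two propositions, and the only point requiring a little care is bookkeeping, namely recognizing that the first proposition supplies the $B$-congruence \emph{without} any fiber restriction, while it is the extra hypothesis $k_1,k_2\in G_p$ that then activates the simplicity result. If one wished to make the argument self-contained, the single substantive computation is the chain $B(k_1^{-1}k_2,h)=B(k_1^{-1},h)+B(k_2,h)=-B(k_1,h)+B(k_2,h)=0$ together with positive definiteness to conclude $k_1^{-1}k_2=e_{r(k_1)}$, but since this is exactly the content of Proposition \ref{g1g2-p-identy} I would avoid repeating it.
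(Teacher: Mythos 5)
Your proof is correct and follows essentially the same route as the paper: quote the preceding proposition to get $k_1\uparrow\uparrow k_2$ for $k_1,k_2\in G(c,g)$, then apply Proposition \ref{g1g2-p-identy} with the hypothesis $k_1,k_2\in G_p$ to conclude $k_1=k_2$. If anything, your bookkeeping remark is slightly more careful than the paper's own two-line proof, which leaves the fiber condition implicit.
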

\begin{proof}
For any two elements $k_1,k_2\in G(c,g)$ there is $k_1\uparrow\uparrow k_2$. Then, on the strength of 
Proposition \ref{g1g2-p-identy}, we have $k_1=k_2$.
\end{proof}
\begin{p}
For any $c\in\mathbb C$, $g,h\in G$ and $k\in G(c,h)$, there is
\begin{equation}
B(g,k)=\overline{c}\cdot B(g,h)\; .
\end{equation}
\end{p}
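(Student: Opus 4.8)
The plan is to exploit conjugate symmetry of $B$ together with the defining property of the set $G(c,h)$, used once with a well-chosen value of the free argument. The statement is essentially a ``transpose'' of the definition, so the whole argument should be a short computation; the only point requiring care is tracking where the complex conjugation on $c$ is produced.

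First I would rewrite the quantity of interest using conjugate symmetry (condition~1 of Definition~\ref{ilocz_skalar}), namely $B(g,k)=\overline{B(k,g)}$. This moves $k$ into the first slot of $B$, which is exactly the slot controlled by the membership $k\in G(c,h)$.

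Next I would invoke the definition of $G(c,h)$. By hypothesis $k\in G(c,h)$ means $B(k,h')=c\cdot B(h,h')$ for every $h'\in G$; specializing the free element $h'$ to the given $g$ yields $B(k,g)=c\cdot B(h,g)$. Substituting this into the expression from the previous step and pushing the complex conjugation through the product gives
\begin{equation}
B(g,k)=\overline{B(k,g)}=\overline{c\cdot B(h,g)}=\overline{c}\cdot\overline{B(h,g)}.
\end{equation}
This is precisely where the factor $\overline{c}$ arises, and it is the one place an error could slip in if one forgot that $B$ is only conjugate-symmetric rather than symmetric over $\mathbb{C}$.

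Finally I would apply conjugate symmetry a second time, now to the remaining factor, writing $\overline{B(h,g)}=B(g,h)$, which collapses the right-hand side to $\overline{c}\cdot B(g,h)$ and closes the proof. The main (and only mild) obstacle is bookkeeping of the conjugation: one must apply conjugate symmetry both at the start and at the end, and commute the conjugate past $c$ in between, so that the $c$ that appears in the definition of $G(c,h)$ correctly emerges as $\overline{c}$ in the conclusion.
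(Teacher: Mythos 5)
Your proposal is correct and follows exactly the same route as the paper's own proof: the chain $B(g,k)=\overline{B(k,g)}=\overline{c\cdot B(h,g)}=\overline{c}\cdot B(g,h)$, using conjugate symmetry, the defining property of $G(c,h)$ specialized at $g$, and conjugate symmetry again. Your explicit renaming of the bound variable in the definition of $G(c,h)$ is a minor point of hygiene the paper leaves implicit, but the argument is identical.
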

\begin{proof}
Let $c\in\mathbb C$, $g,h\in G$ and $k\in G(c,h)$. Then we get
$$
B(g,k)=\overline{B(k,g)}=\overline{c\cdot B(h,g)}=\overline{c }\cdot {B(g,h)}\; .
$$
\end{proof}
\section{Norm structure in a grupoid}
The concept of a norm defined in a groupoid is studied by M. Buliga in \cite{Buliga}. 
\begin{df} By a normed groupoid we understand a pair $(G,||\cdot ||)$, where $G$ is a groupoid and the function $||\cdot ||:G\rightarrow [0,+\infty )$, the so-called groupoid norm in $G$, fulfills the conditions:
\begin{enumerate}
\item $||g||=0$ iff there exists $p\in M$ such that $g=e_p\,$,
\item for any $g,h\in G$, such that $gh\in G$,  there is $||gh||\leq ||g||+||h||$,
\item for any $g\in G$ there is $||g^{-1}||=||g||\,$.
\end{enumerate}
\label{normabuliga}
\end{df}
Condition 2 in the above definiotion is called the triangle inequality. One can show that also the reverse triangle inequality holds for $||\cdot ||$, i.e. 
\begin{equation}
|\; ||h||-||g||\; |\leq ||g^{-1}h||\; ,
\label{reversetriangle}
\end{equation} 
for any $g,h\in G$, such that $g^{-1}h\in G$. Indeed, $||h||=||gg^{-1}h||\leq ||g||+||g^{-1}h||$, which gives us $||h||-||g||\leq ||g^{-1}h||$. Analogously, $||g||=||hh^{-1}g||\leq ||h||+||h^{-1}g||$, which implies $||g||-||h||\leq ||h^{-1}g||$. Since $||g^{-1}h||=||h^{-1}g||$, there is $|\; ||h||-||g||\; |\leq ||g^{-1}h||$.

\begin{p} Let $B$ be a semi-inner product in $G$. Then the mapping $||\cdot ||\colon G\rightarrow [0, +\infty )$, defined by 
$||g||=B(g,g)^{1/2}$, for any $g\in G$, is a groupoid norm in $G$.
\end{p}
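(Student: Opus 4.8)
The plan is to verify the three axioms of Definition \ref{normabuliga} directly from the three properties of a semi-inner product, after first recording one auxiliary identity. Since $B$ is a homomorphism in its first argument and $e_pe_p=e_p$, I would note that $B(e_p,h)=B(e_p,h)+B(e_p,h)$, whence $B(e_p,h)=0$ for every $h\in G$; symmetrically $B(g,e_p)=0$. In particular $B(e_p,e_p)=0$, so $\|e_p\|=0$. Conversely, if $\|g\|=0$ then $B(g,g)=0$, and positive definiteness forces $g=e_p$ for some $p\in M$; this establishes axiom 1. Observe also that $B(g,g)\ge 0$ for all $g$ (it is $0$ at the units and positive otherwise), so $B(g,g)^{1/2}$ is a well-defined non-negative real number and $\|\cdot\|$ indeed maps into $[0,+\infty)$.

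For axiom 3, I would first show that inversion reverses the sign of $B$ in each slot. From $gg^{-1}=e_{d(g)}$ together with the auxiliary identity, the first-argument homomorphism property gives $0=B(gg^{-1},h)=B(g,h)+B(g^{-1},h)$, so that $B(g^{-1},h)=-B(g,h)$ for every $h$; likewise $B(g,h^{-1})=-B(g,h)$ from the second argument. Applying both in turn yields $B(g^{-1},g^{-1})=-B(g,g^{-1})=B(g,g)$, and hence $\|g^{-1}\|=B(g^{-1},g^{-1})^{1/2}=\|g\|$.

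The substantive step is axiom 2, the triangle inequality, and it is here that the semi-inner product structure is genuinely used. Assuming $gh\in G$, I would expand $B(gh,gh)$ using bihomomorphicity in both arguments,
\[
\|gh\|^2=B(gh,gh)=B(g,g)+B(g,h)+B(h,g)+B(h,h),
\]
where the only composability invoked is the assumption $gh\in G$. Conjugate symmetry then collapses the cross terms, $B(g,h)+B(h,g)=B(g,h)+\overline{B(g,h)}=2\operatorname{Re}B(g,h)$, giving
\[
\|gh\|^2=\|g\|^2+2\operatorname{Re}B(g,h)+\|h\|^2.
\]
Finally $\operatorname{Re}B(g,h)\le |B(g,h)|$, and the Cauchy--Schwarz inequality yields $|B(g,h)|\le B(g,g)^{1/2}B(h,h)^{1/2}=\|g\|\,\|h\|$, so $\|gh\|^2\le(\|g\|+\|h\|)^2$ and taking square roots proves $\|gh\|\le\|g\|+\|h\|$. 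I expect the only delicate point to be bookkeeping of composability in the expansion above; the inequality itself is simply the classical derivation of a norm from an inner product transplanted to the partial multiplication of the groupoid.
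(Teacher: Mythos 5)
Your proof is correct and follows essentially the same route as the paper: the same expansion of $B(gh,gh)$ with conjugate symmetry and Cauchy--Schwarz for the triangle inequality, positive definiteness for axiom 1, and sign reversal under inversion for axiom 3. You are in fact slightly more careful than the paper, which asserts $\|e_p\|=0$ and $B(g^{-1},g^{-1})=B(g,g)$ without proof, whereas you derive both from the homomorphism identities $B(e_p,h)=0$ and $B(g^{-1},h)=-B(g,h)$.
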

\begin{proof}
Suppose $||g||=0$, for some $g\in G$. By definition it means that $B(g,g)=0$ and positive definiteness of $B$ implies that $g=e_p$, for some $p\in M$. 

In turn, we can calculate
$||gh||^2=B(gh,gh)=B(g,g)+B(g,h)+B(h,g)+B(h,h)=B(g,g)+B(g,h)+\overline{B(g,h)}+B(h,h)\leq ||g||^2+2|B(g,h)|+||h||^2\leq ||g||^2+2||g||\cdot ||h||+||h||^2=(||g||+||h||)^2$, hence $||gh||\leq ||g||+||h||$.
Finally, $||g^{-1}||=B(g^{-1},g^{-1})=B(g,g)=||g||$.
\end{proof}
\begin{p}Let $|| \cdot ||$ be the groupoid norm defined by a semi-inner product $B$ in a groupoid $G$. Then, for any $c\in\mathbb C$, $g\in G$ and $h\in c\bullet g$, there is
\begin{equation}
||h||=|c|\cdot ||g||\; .
\end{equation}
\end{p}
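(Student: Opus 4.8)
The plan is to square: I would compute $||h||^2 = B(h,h)$ and show it equals $|c|^2\,||g||^2$, after which taking nonnegative square roots finishes the proof.

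First I would unpack the hypothesis $h\in c\bullet g = G(c,g)$, which by definition means $B(h,k)=c\,B(g,k)$ for every $k\in G$. The idea is to feed this identity two convenient choices of $k$. Setting $k=h$ gives $B(h,h)=c\,B(g,h)$, while setting $k=g$ gives $B(h,g)=c\,B(g,g)$.

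Next I would eliminate $B(g,h)$ using conjugate symmetry: $B(g,h)=\overline{B(h,g)}=\overline{c\,B(g,g)}=\overline{c}\,\overline{B(g,g)}$. Here the only point requiring care is that $B(g,g)$ is a real number — it is nonnegative by positive definiteness (and equals $0$ only when $g=e_p$) — so that $\overline{B(g,g)}=B(g,g)$ and the conjugation on that factor disappears. Substituting back, $B(h,h)=c\,\overline{c}\,B(g,g)=|c|^2\,B(g,g)=|c|^2\,||g||^2$. Since both $||h||$ and $|c|\cdot||g||$ are nonnegative, taking square roots yields $||h||=|c|\cdot||g||$.

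The computation is entirely routine; the only thing to watch is the complex-conjugate bookkeeping, and in particular the observation that $B(g,g)$ is real so that $c\,\overline{c}$ collapses to $|c|^2$. Alternatively, one could shortcut the middle step by invoking the previously proved identity $B(g,k)=\overline{c}\cdot B(g,h)$ valid for $k\in G(c,h)$: applied with our $h$ as the element of $G(c,\cdot)$ (whose base is $g$) and with the free first slot set to $g$, it gives $B(g,h)=\overline{c}\,||g||^2$ in one stroke, after which $B(h,h)=c\,B(g,h)=|c|^2\,||g||^2$ as before.
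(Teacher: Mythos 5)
Your proof is correct and in substance the same as the paper's: the paper's entire proof is the one-line computation $||h||=B(h,h)^{1/2}=B(cg,cg)^{1/2}=|c|\cdot B(g,g)^{1/2}$, which uses the (strictly speaking abusive) shorthand $cg$ for an element $h\in c\bullet g$, and your two specializations $k=h$ and $k=g$ of the defining identity $B(h,k)=c\,B(g,k)$, combined with conjugate symmetry, are precisely the justification that the paper's shorthand elides. One microscopic remark: the reality of $B(g,g)$ follows at once from conjugate symmetry alone, since $B(g,g)=\overline{B(g,g)}$; this also covers the case $g=e_p$, where the positive definiteness axiom as stated is silent (there one instead gets $B(e_p,\cdot)=0$ from the homomorphism property), so your appeal to positive definiteness is slightly more than is needed but harmless.
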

\begin{proof}If $h\in c\bullet g$, we can write 
$$||h||=B(h,h)^{1/2}=B(cg,cg)^{1/2}=|c|\cdot B(g,g)^{1/2}=|c|\cdot ||g||\; .$$
\end{proof}
\begin{df} Suppose that  $(G,||\cdot ||)$ is a normed groupoid and $\lambda$ is an affine congruence relation in $G$.
We say that $||\cdot||$ is  $\lambda$-consistent if, for any $g_1,g_2\in G$, the following conditions are fulfilled:
\begin{enumerate}
\item $g_1\lambda g_2 \Rightarrow ||g_1||=||g_2||$, 
\item {[$g_1\lambda g_2\wedge g_1g_2\in G$]  $\Rightarrow  ||g_1g_2||=2||g_1||$.}
\end{enumerate}
\end{df}
\begin{p}
Assume $B$ to be a semi-inner product in $G$ and  $\uparrow\uparrow$   be the affine congruence relation in $G$  given by Definition \ref{parallel}. Then  the groupoid norm $||\cdot ||$,  defined by $||g||=B(g,g)^{1/2}$, for $g\in G$, is $\uparrow\uparrow$-consistent. 
\end{p}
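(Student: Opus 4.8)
The plan is to verify the two defining conditions of $\uparrow\uparrow$-consistency directly, drawing only on the semi-inner product axioms and on the characterization of $\uparrow\uparrow$ in Definition \ref{parallel}, namely that $g_1\uparrow\uparrow g_2$ means $B(g_1,h)=B(g_2,h)$ for every $h\in G$.

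First I would establish condition~1. Assuming $g_1\uparrow\uparrow g_2$, the key move is to feed the two elements themselves back in as the test argument: putting $h=g_1$ gives $B(g_1,g_1)=B(g_2,g_1)$, and putting $h=g_2$ gives $B(g_1,g_2)=B(g_2,g_2)$. Combining the first identity with conjugate symmetry yields $B(g_1,g_1)=B(g_2,g_1)=\overline{B(g_1,g_2)}=\overline{B(g_2,g_2)}$. Since $B(g_2,g_2)=||g_2||^2$ is a non-negative real number it equals its own conjugate, so $B(g_1,g_1)=B(g_2,g_2)$ and hence $||g_1||=||g_2||$.

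Next I would treat condition~2 under the extra hypothesis $g_1g_2\in G$. Because $B$ is a bihomomorphism and the product $g_1g_2$ is defined, I can expand in both arguments exactly as in the triangle-inequality computation of the preceding proposition, obtaining $||g_1g_2||^2=B(g_1g_2,g_1g_2)=B(g_1,g_1)+B(g_1,g_2)+B(g_2,g_1)+B(g_2,g_2)$. I then invoke $g_1\uparrow\uparrow g_2$ once more: the substitutions already made show $B(g_2,g_1)=B(g_1,g_1)$ and $B(g_1,g_2)=B(g_2,g_2)$, while condition~1 gives $B(g_1,g_1)=B(g_2,g_2)=||g_1||^2$. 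Thus all four summands coincide with the real number $||g_1||^2$, so $||g_1g_2||^2=4||g_1||^2$ and $||g_1g_2||=2||g_1||$.

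The computations are routine once the right substitutions are chosen; the one point that requires care is the bookkeeping of the cross terms. A priori $B(g_1,g_2)$ and $B(g_2,g_1)$ are complex, so before adding the four terms into $4||g_1||^2$ and taking a real square root I must use both the $\uparrow\uparrow$ relation (to replace the off-diagonal values by the diagonal ones) and conjugate symmetry together with the fact, forced by positive definiteness, that each $B(g,g)$ is real. I also need the composability hypothesis $g_1g_2\in G$ to license the two-argument expansion of the bihomomorphism.
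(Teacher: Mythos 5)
Your proposal is correct and follows essentially the same route as the paper's own proof: the same substitutions $h=g_1$ and $h=g_2$ into the $B$-congruence identity, combined with conjugate symmetry and the realness of $B(g,g)$, yield $\|g_1\|=\|g_2\|$, and the same four-term bihomomorphism expansion of $B(g_1g_2,g_1g_2)$ gives $\|g_1g_2\|=2\|g_1\|$. Your explicit bookkeeping of the cross terms is slightly more careful than the paper's terse ``i.e.\ $B(g_1,g_2),B(g_2,g_1)\in\mathbb{R}$,'' but the argument is the same.
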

\begin{proof}
Let $g_1, g_2\in G$ and suppose that $g_1\uparrow\uparrow g_2$. Then $B(g_1,h)=B(g_2,h)$, for any $h\in G$. In particular, we obtain 
$B(g_1,g_1)=B(g_2,g_1)$ and $B(g_1,g_2)=B(g_2,g_2)$, i.e. $B(g_1,g_2), B(g_2,g_1)\in\mathbb R$, which implies
the identity $||g_1||=||g_2||$. If $g_1\uparrow\uparrow g_2$ and $g_1g_2\in G$, we obtain 
$||g_1g_2||^2=B(g_1g_2,g_1g_2)=B(g_1,g_1)+B(g_1,g_2)+B(g_2,g_1)+B(g_2,g_2)=4||g_1||^2$. Thus we get 
$||g_1g_2||=2||g_1||$.
\end{proof}
\begin{df}\label{parallelogram identiy}Suppose $\lambda$ is an affine congruence relation  and  $||\cdot ||$ is a $\lambda$-consistent groupoid norm in $G$. We say that $||\cdot ||$ fulfills the parallelogram identity with some elements $g,h\in G$, if there exist $g_1,g_2, h_1,h_2\in G$ such that $g_1h_1, g_2^{-1}h_2\in G$,
$g\lambda g_1\lambda g_2$, $h\lambda h_1\lambda h_2$ 
 and
\begin{equation}
||g_1h_1||^2+||g_2^{-1}h_2||^2=2||g||^2+2||h||^2.
\label{parallelogram identiy}
\end{equation}
If the identity (\ref{parallelogram identiy}) is fulffiled with any elements $g,h\in G$, we say that $||\cdot ||$ fulfills the parallelogram identity in $G$.
\end{df}
\begin{p}The groupoid norm $||\cdot ||$ defined by a semi-inner product $B$ in $G$ fulfills the parallelogram identity with all elements $g,h\in G$, for which there exist $g_1, g_2, h_1,h_2\in G$ such that $g_1h_1, g_2^{-1}h_2\in G$
and $g\lambda g_1\lambda g_2$, $h\lambda h_1\lambda h_2$.
\end{p}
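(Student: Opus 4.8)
The plan is to expand each of the two squared norms on the left of the parallelogram identity by the bihomomorphism property of $B$, rewrite the resulting cross terms as real parts via conjugate symmetry, and then use the $B$-congruences $g\uparrow\uparrow g_1\uparrow\uparrow g_2$ and $h\uparrow\uparrow h_1\uparrow\uparrow h_2$ to show that the two cross terms are equal and hence cancel.

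First I would collect three facts already available in the excerpt. Because $B$ is a homomorphism in each argument, $B(k^{-1},l)=-B(k,l)$ and $B(k,l^{-1})=-B(k,l)$ on composable expressions; in particular $B(g_2^{-1},g_2^{-1})=B(g_2,g_2)=||g_2||^2$. By conjugate symmetry, $B(k,l)+B(l,k)=2\,\mathrm{Re}\,B(k,l)$. Finally, since $||\cdot ||$ is $\uparrow\uparrow$-consistent and $\uparrow\uparrow$ is transitive, one has $||g_1||=||g_2||=||g||$ and $||h_1||=||h_2||=||h||$.

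Expanding with $g_1h_1\in G$ gives
\begin{equation*}
||g_1h_1||^2=||g_1||^2+||h_1||^2+2\,\mathrm{Re}\,B(g_1,h_1),
\end{equation*}
while expanding with $g_2^{-1}h_2\in G$, using the sign rule for the inverse in the first slot, gives
\begin{equation*}
||g_2^{-1}h_2||^2=||g_2||^2+||h_2||^2-2\,\mathrm{Re}\,B(g_2,h_2).
\end{equation*}
The decisive step is to compare the two cross terms: from $g_1\uparrow\uparrow g_2$ one has $B(g_1,h_1)=B(g_2,h_1)$, and from $h_1\uparrow\uparrow h_2$ together with conjugate symmetry one has $B(g_2,h_1)=B(g_2,h_2)$, so $B(g_1,h_1)=B(g_2,h_2)$. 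Adding the two expansions then cancels the cross terms, and substituting the equal norms yields $2||g||^2+2||h||^2$ exactly, which is the required parallelogram identity.

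The only real obstacle is bookkeeping. I must check that the minus sign in the second expansion is genuine --- it originates from inverting $g_2$ in the first argument and not from anything about $h_2$ --- and that the congruence chain is applied in the correct slot: congruence in the first argument transfers $g_1\mapsto g_2$ directly, whereas the transfer $h_1\mapsto h_2$ lives in the second argument and must be routed through conjugate symmetry. Once these two transfers are combined, the cancellation of the cross terms is forced and the identity follows.
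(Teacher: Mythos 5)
Your proof is correct and takes essentially the same route as the paper's: both expand $||g_1h_1||^2$ and $||g_2^{-1}h_2||^2$ by the bihomomorphism property, transfer the cross terms along the congruences $g\uparrow\uparrow g_1\uparrow\uparrow g_2$, $h\uparrow\uparrow h_1\uparrow\uparrow h_2$ (you equate $B(g_1,h_1)=B(g_2,h_2)$; the paper rewrites both cross terms in terms of $g,h,g^{-1}$), and cancel them via the sign flip coming from the inverse in the first slot. Your explicit remark that the second-argument transfer $h_1\mapsto h_2$ must be routed through conjugate symmetry is a detail the paper leaves implicit, but the underlying argument is identical.
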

\begin{proof} Let $\uparrow\uparrow$ be the affine congruence relation in $G$ given by  Definition \ref{parallel} and assume $g,g_1,g_2, h,h_1,h_2\in G$ be elements such that $g_1h_1, g_2^{-1}h_2\in G$,  $g\uparrow\uparrow g_1\uparrow\uparrow g_2$,  $h\uparrow\uparrow h_1\uparrow\uparrow h_2$.  
Then, by definition we obtain two identities
$$||g_1h_1||^2=B(g_1h_1,g_1h_1)=||g||^2+B(g,h)+B(h,g)+||h||^2\; ,$$ 
$$||g_2^{-1}h_2||^2=B(g_2^{-1}h_2,g_2^{-1}h_2)=||g^{-1}||^2+B(g^{-1},h)+B(h,g^{-1})+||h||^2\; ,$$ 
from which we get
$$||g_1h_1||^2+||g_2^{-1}h_2||^2=$$
$$=||g||^2+B(g^{-1},h)+B(h,g^{-1})+||h||^2+||g||^2+B(g,h)+B(h,g)+||h||^2=$$
$$=2||g||^2+2||h||^2\; .$$
\end{proof}
\begin{p}
Suppose $\lambda$ is an affine congruence relation in G and $||\cdot ||$ is a $\lambda$ -consistent groupoid norm which fulfills the parallelogram identity (\ref{parallelogram identiy}) in $G$. Then the polarization formula 
\begin{equation}
B(g,h)=\frac{1}{4}(||g_1h_1||^2-||g_2^{-1}h_2||^2),
\label{polarization-real}
\end{equation}
for $g,g_1,g_2,h, h_1,h_2\in G$ such that 
$g\lambda g_1\lambda g_2$,
 $h\lambda h_1\lambda h_2$ and
$g_1h_1, g_2^{-1}h_2\in G$, defines the real-valued semi-inner product in $G$.
\end{p}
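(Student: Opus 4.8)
The plan is to check, in turn, the three axioms of Definition~\ref{ilocz_skalar} together with the bihomomorphism property, the genuine obstacle being additivity. The starting point I would use is that the two norms occurring in the polarization formula depend only on the $\lambda$-classes of $g$ and $h$. Indeed, if $g_1,\tilde g_1$ are both $\lambda$-equivalent to $g$ and $h_1,\tilde h_1$ both $\lambda$-equivalent to $h$, with $g_1h_1,\tilde g_1\tilde h_1\in G$, then the congruence axiom (item~1 of Definition~\ref{relacja lambda}) gives $g_1h_1\lambda\tilde g_1\tilde h_1$, whence $||g_1h_1||=||\tilde g_1\tilde h_1||$ by the first condition of $\lambda$-consistency; the same applies to $||g_2^{-1}h_2||$, once one verifies that $\lambda$ is compatible with inversion (this holds for the relation $\uparrow\uparrow$ of Definition~\ref{parallel}, and is the one compatibility I would need to check for a general $\lambda$). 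This makes $B$ well defined, shows that the identity (\ref{parallelogram identiy}) holds for \emph{every} admissible system of representatives, and yields the convenient single-product form
\[
B(g,h)=\frac12\bigl(||g_1h_1||^2-||g||^2-||h||^2\bigr),
\]
obtained by eliminating $||g_2^{-1}h_2||^2$ through (\ref{parallelogram identiy}) and using $||g_1||=||g||$, $||h_1||=||h||$; dually $B(g,h)=-\tfrac12\bigl(||g_2^{-1}h_2||^2-||g||^2-||h||^2\bigr)$.

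With this in hand the three pointwise axioms follow cleanly. For positive definiteness I take both arguments equal to $g$ and choose representatives so that the negative term is $g_2^{-1}g_2=e_{d(g_2)}$, of norm $0$, while the positive term is a product $g_1h_1$ of two representatives of $g$; the second condition of $\lambda$-consistency gives $||g_1h_1||=2||g||$, so $B(g,g)=||g||^2$, which is strictly positive unless $g=e_p$ by axiom~1 of Definition~\ref{normabuliga}. Symmetry is exactly where the parallelism axiom (\ref{rownoleglobok}) enters: applied to representatives $g_1,g_2$ of $g$ and $h_1,h_2$ of $h$ with $g_1h_2,h_1g_2\in G$ it gives $g_1h_2\lambda h_1g_2$, hence $||g_1h_2||=||h_1g_2||$; by the single-product form the left-hand norm computes $B(g,h)$ and the right-hand one computes $B(h,g)$, so $B(g,h)=B(h,g)$. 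Cauchy--Schwarz is then immediate from the triangle inequality and $||g^{-1}||=||g||$: from $||g_1h_1||\le||g||+||h||$ the first form gives $B(g,h)\le||g||\,||h||$, and from $||g_2^{-1}h_2||\le||g||+||h||$ the dual form gives $B(g,h)\ge-||g||\,||h||$, so $|B(g,h)|\le||g||\,||h||=B(g,g)^{1/2}B(h,h)^{1/2}$.

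The main obstacle is additivity, $B(gg',h)=B(g,h)+B(g',h)$ for composable $g,g'$ (additivity in the second argument then follows by symmetry). Here I would transcribe the Jordan--von Neumann argument into the groupoid. Writing $2k$ for a composite $k_1k_2$ of two representatives of $k$ (so $||k_1k_2||=2||k||$ by consistency), the plan is first to apply the parallelogram identity to the composites playing the roles of $x\pm z$ and $y\pm z$ so as to obtain the master relation
\[
B(g,k)+B(g',k)=\frac12\,B\bigl(gg',\,2k\bigr),
\]
then to specialize $g'=e_{r(g)}$, where $B(e_{r(g)},k)=0$ follows directly from the single-product form, to get the doubling rule $B(g,2k)=2B(g,k)$, and finally to combine them into
\[
B(g,k)+B(g',k)=\frac12\,B(gg',2k)=B(gg',k).
\]

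I expect the real difficulty to be groupoid-theoretic rather than algebraic. In a vector space the combinations $x+z$, $y+z$, $x-z$, $y-z$, $x+y+2z$, $x+y-2z$ always make sense, whereas here each is a partial product that exists only when the relevant sources and targets agree; the whole computation must therefore be run after replacing $g,g',k$ by a carefully chosen family of $\lambda$-equivalent representatives for which \emph{all} composites entering the parallelogram step are simultaneously defined. Producing such a compatible family is precisely where the completeness of $\lambda$ (the $B$-affine hypothesis, Definition~\ref{G-afiniczny}) must be used, and checking --- via the class-invariance established in the first paragraph --- that the final value does not depend on the choices made is, to my mind, the crux of the proof.
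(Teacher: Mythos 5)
Your proposal follows essentially the same route as the paper's proof: well-definedness from $\lambda$-consistency, symmetry from the parallelism axiom (\ref{rownoleglobok}) applied to $g_1h_2$ and $h_1g_2$, positive definiteness from the doubling condition $||k_1k_2||=2||g||$ together with $||g^{-1}g||=0$, Cauchy--Schwarz from a triangle-type inequality combined with the parallelogram identity, and additivity by transcribing the Jordan--von Neumann argument: the same master relation $B(g,k)+B(g',k)=\frac{1}{2}B(gg',2k)$ (the paper's long computation with representatives $u_i,v_i,x_i,y_i$, $i=1,\dots,10$), the same auxiliary fact $B(g,e_p)=0$, and the same doubling-and-specialization finish. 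Two local divergences are worth recording. First, your Cauchy--Schwarz is actually \emph{more} complete than the paper's: the paper uses the reverse triangle inequality (\ref{reversetriangle}) to prove only the upper bound $B(g,h)\leq ||g||\cdot ||h||$ and never establishes $-||g||\cdot ||h||\leq B(g,h)$, which is needed for the modulus in condition 3 of Definition \ref{ilocz_skalar}; your dual estimate via $B(g,h)=\frac{1}{2}\bigl(||g||^2+||h||^2-||g_2^{-1}h_2||^2\bigr)$ and the plain triangle inequality supplies exactly the missing half. Second, the difficulties you flag are real and are precisely the points the paper passes over: the paper asserts well-definedness of (\ref{polarization-real}) ``since $||\cdot ||$ is $\lambda$-consistent'' without addressing the inversion-compatibility needed for the $||g_2^{-1}h_2||$ term, and it handles the simultaneous-composability problem in the additivity step by fiat (``assume the composability of elements in all cases that appear in formulae below''), not by invoking completeness --- which, as you correctly observe, is not among the stated hypotheses. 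So at the level of strategy your proposal matches the paper; at the steps where they differ, yours is the more careful version, with the one caveat that you present the chain of parallelogram-identity applications yielding the master relation as a plan rather than carrying it out explicitly, whereas the paper writes out that computation in full (under its blanket composability assumption).
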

\begin{proof} 
The proof we start with the observation, that $B(g,h)$ is well defined for any elements $g,h\in G$. Indeed, by the assumption there exist elements
$g_1,g_2, h_1,h_2\in G$ such that 
$g\lambda g_1\lambda g_2$,
 $h\lambda h_1\lambda h_2$ and
$g_1h_1, g_2^{-1}h_2\in G$, which is enough to define $B(g,h)$ by formula (\ref{polarization-real}). This definition is correct, since $||\cdot ||$ is $\lambda$-consistent.

Let $g\lambda g_i$ and $h\lambda h_i$, for some $g,g_i,h,h_i\in G$, $i=1,2,3,4$,  such that $g_1h_1,g_2^{-1}h_2, h_3g_3, h_4^{-1}g_4\in G$. Since $\lambda$ is an affine congruence relation in $G$, we have $g_1h_1\lambda h_3g_3$ and $g_2^{-1}h_2\lambda h_4^{-1}g_4$. On the other hand, 
since $||\cdot ||$ is $\lambda$-consistent, there is $||g_1h_1||=||h_3g_3||$ and
 $||g_2^{-1}h_2||=||h_4^{-1}g_4||$. 
Hence, we obtain the (conjugate) symmetry property  
$$B(g,h)=\frac{1}{4}(||g_1h_1||^2-||g_2^{-1}h_2||^2)=
\frac{1}{4}(||h_3g_3||^2-||h_4^{-1}g_4||^2)=B(h,g).$$ 

Let $k_1, k_2\in G$ be elements such that $g\lambda k_1\lambda k_2$ and $k_1k_2\in G$. Then,  
the positive definitness property of $B$ is clear from $$B(g,g)=\frac{1}{4}(||k_1k_2||^2-||g^{-1}g||^2)=\frac{1}{4}(2||g||)^2=||g||^2.$$

To prove the Cauchy-Schwarz inequality for $B$, we shall use the parallelogram identity (\ref{parallelogram identiy}) assumed 
for $||\cdot ||$.
From the reverse triangle inequality (\ref{reversetriangle}) we obtain 
$$(\; ||g||-||h||\; )^2=||g||^2-2||g||\cdot ||h||+||h||^2\leq ||g_2^{-1}h_2||^2\; , $$
$$2||g||^2+2||h||^2-||g_2^{-1}h_2||^2-||g_2^{-1}h_2||^2\leq 4||g||\cdot ||h||\; .$$
Now, on the strength of the parallelogram identity (\ref{parallelogram identiy}), from the last inequality we obtain
$$||g_1h_1||^2-||g_2^{-1}h_2||^2\leq 4||g||\cdot ||h||\; ,$$
which means that
$$
B(g,h)=\frac{1}{4}(||g_1h_1||^2-||g_2^{-1}h_2||^2)\leq ||g||\cdot ||h||\; .
$$

In order to complete this proof, one needs to show that $B(g,h)$ defined by the polarization identity is a bihomomorphism. Since the (conjugate) symmetry of $B(g,h)$ has been already proved, it is enough to show that $B(gh,k)=B(g,k)+B(h,k)$.

Directly from the polarization identity (\ref{polarization-real}) one can notice that
$$B(g,e_p)=\frac{1}{4}(||ge_{r(g)}||^2-||g^{-1}e_{d(g)}||^2)=
\frac{1}{4}(||g||^2-||g^{-1}||^2)=0,$$ for any $g\in G$ and $p\in M$.

Choose elements $u,v,x,y,u_i,v_i,x_i,y_i\in G$,  such that $u\lambda u_i$, $v\lambda v_i$,$x\lambda x_i$,$y\lambda y_i$, for 
$i=1,2,...$, and assume the composability of elements in all cases that appear in formulae below, in particular 
$u_1v_1, u_2^{-1}v_2, x_1y_1, x_2^{-1}y_2\in G$. 

Then,  using the polarization  identity (\ref{polarization-real}), we can write 
$$B(u,v)=\frac{1}{4}(||u_1v_1||^2-||u_2^{-1}v_2||^2)\; \und\; B(x,y)=\frac{1}{4}(||x_1y_1||^2-||x_2^{-1}y_2||^2).$$ 
Now, we apply the parallelogram identity (\ref{parallelogram identiy}) and calculate the sum
$$B(u,v)+B(x,y)=\frac{1}{4}(||u_1v_1||^2+||x_1y_1||^2)-\frac{1}{4}(||u_2^{-1}v_2||^2+||x_2^{-1}y_2||^2)=$$ 

$$=\frac{1}{4}\cdot\frac{1}{2}(||(u_3v_3)(x_3y_3)||^2+||(u_4v_4)^{-1}(x_4y_4)||^2)-$$
$$-\frac{1}{4}\cdot\frac{1}{2}(||(u_5^{-1}v_5)(x_5^{-1}y_5)||^2+||(u_6^{-1}v_6)^{-1}(x_6^{-1}y_6)||^2)=$$

$$=\frac{1}{4}\cdot\frac{1}{2}(||(u_3v_3)(x_3y_3)||^2-||(u_5^{-1}v_5)(x_5^{-1}y_5)||^2)+$$
$$+\frac{1}{4}\cdot\frac{1}{2}(||(u_4v_4)^{-1}(x_4y_4)||^2-||(u_6^{-1}v_6)^{-1}(x_6^{-1}y_6)||^2)=$$

$$=\frac{1}{2}\cdot\frac{1}{4}(||(u_7x_7)(v_7y_7)||^2-||(u_8x_8)^{-1}(v_8y_8)||^2)+$$
$$+\frac{1}{2}\cdot\frac{1}{4}(||(u_9^{-1}x_9)(v_9^{-1}y_9)||^2-||(u_{10}^{-1}x_{10})^{-1}(v_{10}^{-1}y_{10})||^2)=$$

$$=\frac{1}{2}B(u_8x_8,v_8y_8)+\frac{1}{2}B(u_9^{-1}x_9,v_9^{-1}y_9).$$ 

Since $B(u,e_p)=0$, from the above result we obtain
$B(u,v)=B(u,v)+B(u,e_p)=
\frac{1}{2}B(u_1u_2,ve_{r(v)})+\frac{1}{2}B(u^{-1}u,v^{-1}e_{d(v)})=\frac{1}{2}B(u_1u_2,v)$.

Let us make the substitutions  $u=g, x=h, v=y=k$ and assume that $gh\in G$. Then we get
$$B(g,k)+B(h,k)=\frac{1}{2}B(gh,k_1k_2)+\frac{1}{2}B(g^{-1}h_1,k^{-1}k)=B(gh,k).$$ 
\end{proof}


\end{document}